\newtheorem*{theorem}{Theorem}
\newtheorem*{lemma}{Lemma}
\title{A holomorphic
transformation to a miniversal deformation
under *congruence does not always exist}
\author{Lena Klimenko (e.n.klimenko@gmail.com)\\National
Transport University,
 Suvorov 1, Kiev, Ukraine
}
\date{}
\begin{document}
\maketitle

\begin{abstract}
V.I. Arnold [Russian Math. Surveys 26
(2) (1971) 29--43] constructed
miniversal deformations of square
complex matrices under similarity.
Reduction transformations to them and
also to miniversal deformations of
matrix pencils and matrices under
congruence can be taken holomorphic. We
prove that this is not true for
reduction transformations to miniversal
deformations of matrices under
*congruence.

{\it AMS classification:} 15A21, 15A63,
47A07

{\it Keywords:} Sesquilinear forms;
*Congruence; Miniversal deformations
\end{abstract}

The reduction of a matrix to its Jordan
form is an unstable operation: both the
Jordan form and the reduction
transformation depend discontinuously
on the entries of the original matrix.
Therefore, if the entries of a matrix
are known only approximately, then it
is unwise to reduce it to Jordan form.
Furthermore, when investigating a
family of matrices smoothly depending
on parameters, then although each
individual matrix can be reduced to a
Jordan form, it is unwise to do so
since in such an operation the
smoothness relative to the parameters
is lost.

For these reasons, Arnold \cite{arn}
constructed miniversal deformations of
matrices under similarity; that is, a
simple normal form to which not only a
given square matrix $A$ but all
matrices $B$ close to it can be reduced
by similarity transformations that
smoothly depend on the entries of $B$.
Miniversal deformations were also
constructed for matrix pencils
\cite{kag,gar_ser,k-s_triang}, matrices
under congruence \cite{f_s}, and
matrices under *congruence
\cite{def-sesq} (two complex matrices
$A$ and $B$ are \emph{*congruent} if
$A=S^*BS$ for some nonsingular $S$).
For all matrices in a neighborhood of a
given matrix with respect to similarity
or congruence and all matrix pencils in
a neighborhood of a given pencil with
respect to equivalence, reduction
transformations to miniversal
deformations can be taken holomorphic.
We prove that reduction transformations
to miniversal deformations under
*congruence cannot be taken holomorphic
even if we restrict ourselves to
$1\times 1$ matrices. All matrices that
we consider are complex matrices.

Let $[a]$ be any nonzero $1\times 1$
matrix; let $a=re^{i\varphi }$ with
$r>0$. Then $[a]$ is *congruent via
$S=[\sqrt r]$ to $[b]:=[e^{i\varphi
}]$, which is a canonical form of $[a]$
for *congruence. All matrices
$[a+\varepsilon ]$ that are
sufficiently close to $[a]$ can be
simultaneously reduced by some
transformation
\begin{equation}\label{tef}
[s(\varepsilon)]^* [a+\varepsilon ]
[s(\varepsilon)],\qquad
\text{$s(\varepsilon )$
is continuous, }
s(0)=\sqrt r,
\end{equation}
to the form
\begin{equation}\label{fru}
[\varphi(\varepsilon )]= \left\{
  \begin{array}{ll}
    [b+\alpha(\varepsilon)] &
\hbox{if $a\notin \mathbb R$,} \\ {}
   [ b+\alpha(\varepsilon)i] &
\hbox{if $a\in \mathbb R$,}
  \end{array}
\right.\qquad\text{where
$\alpha(\varepsilon)$ is real valued},
\end{equation}
which is the miniversal deformation of
$[a]$ under *congruence (see
\cite{def-sesq}).

The purpose of this note is to show
that the complex functions
$s(\varepsilon )$ and $\varphi
(\varepsilon )$ in \eqref{tef} and
\eqref{fru} cannot be taken holomorphic
at zero.

Recall that if a complex-valued
function $f(z)$ is holomorphic at $0$,
then it is holomorphic in some
neighborhood $U$ of $0$ and the
Cauchy--Riemann equations
\begin{equation}\label{juw}
 \frac{\partial u}{\partial x}(x_0,y_0)
=\frac{\partial v}{\partial y}(x_0,y_0)
,\qquad
\frac{\partial u}{\partial y}(x_0,y_0)=
\frac{\partial v}{\partial x}(x_0,y_0)
\end{equation}
hold for all $x_0+iy_0\in U$, where
$u(x,y)$ and $v(x,y)$ are the real and
imaginary parts of $f(z)$:
\[
f(x+iy)=u(x,y)+iv(x,y),\qquad
x,y,u(x,y),v(x,y)\in\mathbb R.
\]

We can suppose that $a=b$. For
simplicity we suppose that $b=1$.

\begin{theorem}
If $a=b=1$, then the complex functions
$s(\varepsilon )$ and $\varphi
(\varepsilon )=1+\alpha(\varepsilon)i$
in \eqref{tef} and \eqref{fru} cannot
be taken holomorphic at zero.
\end{theorem}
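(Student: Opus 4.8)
The plan is to use the defining equation of the reduction to determine $|s(\varepsilon)|^{2}$ and $\varphi(\varepsilon)$ explicitly and then to check the Cauchy--Riemann equations \eqref{juw}. For a $1\times 1$ matrix one has $[s(\varepsilon)]^{*}=[\overline{s(\varepsilon)}]$, so with $a=b=1$ the requirement that \eqref{tef} take the form \eqref{fru} reads
\[
|s(\varepsilon)|^{2}(1+\varepsilon)=1+\alpha(\varepsilon)\,i ,
\]
with $\alpha(\varepsilon)$ real, $s(\varepsilon)$ continuous, and $s(0)=1$. Since $s$ is continuous with $s(0)\neq 0$, the quantity $|s(\varepsilon)|^{2}$ is real and positive for all small $\varepsilon$; writing $\varepsilon=x+iy$ and separating real and imaginary parts gives $|s(\varepsilon)|^{2}(1+x)=1$ and $|s(\varepsilon)|^{2}y=\alpha(\varepsilon)$, whence
\[
|s(\varepsilon)|^{2}=\frac{1}{1+x},\qquad
\varphi(\varepsilon)=1+\frac{y}{1+x}\,i .
\]
So $\varphi$ is forced entirely, and $s$ is forced up to a continuous argument; it then remains only to observe that neither can be holomorphic at $0$.

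For $\varphi$, its real and imaginary parts are $u(x,y)\equiv 1$ and $v(x,y)=y/(1+x)$, so $\partial u/\partial x\equiv 0$ whereas $\partial v/\partial y=1/(1+x)$, which equals $1$ at the origin; the first equation of \eqref{juw} fails at $0$, so $\varphi$ is not holomorphic there. For $s$, suppose an admissible $s$ were holomorphic at $0$: being continuous with $s(0)=1$ it is nonvanishing near $0$, hence admits a holomorphic logarithm on a small disc, so $\log|s(\varepsilon)|$ (the real part of that logarithm) is harmonic near $0$; but $\log|s(\varepsilon)|=-\tfrac12\log(1+x)$ has Laplacian $\tfrac12(1+x)^{-2}\neq 0$, a contradiction. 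Both arguments are insensitive to the remaining freedom (the choice of reducing family for $\varphi$, and the argument of $s$ for $s$), so they exclude every admissible normalization.

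I do not expect a genuine obstacle here: once the reduction equation is written in real coordinates, the failure of holomorphy is immediate. The one step deserving care is the first one --- verifying that \eqref{tef}--\eqref{fru} really do force $|s(\varepsilon)|^{2}=1/(1+\operatorname{Re}\varepsilon)$ and the stated form of $\varphi$ --- so that the conclusion is not an artefact of one particular choice of reducing transformation; after that it is just Cauchy--Riemann for $\varphi$ and harmonicity of $\log|s|$ for $s$.
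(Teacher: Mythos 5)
Your proposal is correct, and it splits naturally into two halves that compare differently with the paper. For $\varphi$ you do essentially what the paper does: solve the reduction equation in real coordinates to get $|s(\varepsilon)|^2(1+x)=1$ and $\varphi(\varepsilon)=1+\frac{y}{1+x}\,i$ (the paper writes the same thing in shifted coordinates $\varepsilon=-1+x+iy$, giving $\varphi=1+\frac yx i$), and then observe that the first Cauchy--Riemann equation $u'_x=v'_y$ fails since $u\equiv 1$ while $v'_y=1/(1+x)\neq 0$. For $s$, however, you take a genuinely different and much shorter route. The paper proves a separate Lemma: from $u^2+v^2=1/x$ and the Cauchy--Riemann equations it derives, via Cramer's rule, two linear systems for the partial derivatives, integrates them to get $u=C(y)/\sqrt x$, $v=D(y)/\sqrt x$ on one hand and $u,v$ of the form $A(x)\cos\frac y{2x}+B(x)\sin\frac y{2x}$ on the other, and concludes $u=v=0$, contradicting the modulus constraint. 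You instead note that an admissible $s$ is nonvanishing near $0$ (since $|s|^2=1/(1+x)$, or by continuity and $s(0)=1$), so if it were holomorphic then $\log|s|$ would be harmonic near $0$; but $\log|s|=-\tfrac12\log(1+x)$ has Laplacian $\tfrac12(1+x)^{-2}\neq 0$, a contradiction. Both arguments correctly rule out every admissible choice, since $|s|^2$ and $\varphi$ are forced by the reduction equation, a point you rightly flag as the step needing care. What each buys: your harmonicity argument is cleaner and avoids the paper's ODE analysis entirely, at the price of invoking the standard complex-analysis fact that $\log|f|$ is harmonic where a holomorphic $f$ is nonzero (equivalently, the local existence of a holomorphic logarithm); the paper's Lemma is longer but entirely elementary, using nothing beyond the Cauchy--Riemann equations, Cramer's rule, and explicit solution of two simple ODEs.
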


Represent $\varepsilon$ in the form
$\varepsilon =-1+x+iy$, in which
$(x,y)\in\mathbb R^2$ is in a
neighborhood of $(1,0)$. Then
$[a+\varepsilon] =[x+iy]$ is reduced by
some transformation \eqref{tef} of the
form
\begin{equation*}\label{fwi}
 [x+iy]\mapsto [|s(\varepsilon )|^2(x+iy)]
\end{equation*}
to the miniversal deformation
$[1+\alpha(\varepsilon)i]$ with
$\alpha(\varepsilon)
 \in\mathbb R$.
Therefore, $|s(\varepsilon
)|^2(x+iy)=1+\alpha(\varepsilon)i$, and
so
\begin{equation}\label{flm}
 |s(\varepsilon )|^2x=1,\qquad
|s(\varepsilon
)|^2y=\alpha(\varepsilon).
\end{equation}
By these equalities,
\[
\varphi
(\varepsilon )=1+\alpha(\varepsilon)i=
1+|s(\varepsilon
)|^2yi=1+\frac yx i.
\]
The real part of $\varphi (\varepsilon
)$ is $1$, the imaginary part is $y/x$,
they do not satisfy \eqref{juw}, and so
$\varphi (\varepsilon )$ is not
holomorphic.

Note that the holomorphicity of
$[s(\varepsilon )]$ does not imply the
holomorphicity of $[s(\varepsilon
)]^*$, and so the non-holomorphicity of
$\varphi (\varepsilon )$ does not imply
the non-holomorphicity of
$s(\varepsilon )$.

The first equality in \eqref{flm} is
represented in the form
\begin{equation}\label{dey}
 u(x,y)^2+v(x,y)^2=\frac 1x
\end{equation}
in which $u(x,y)$ and $v(x,y)$ are the
real and imaginary parts of
$s(\varepsilon)$.

The function $s(\varepsilon )$ cannot
be taken holomorphic at $0$ due to the
following lemma.

\begin{lemma}
There exist no real functions $u(x,y)$
and $v(x,y)$ in a neighborhood of
$(1,0)$ such that \eqref{dey} holds and
$u(x,y)+iv(x,y)$ is holomorphic.
\end{lemma}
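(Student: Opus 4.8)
The plan is to derive a contradiction from the classical fact that the logarithm of the modulus of a zero-free holomorphic function is harmonic. First I would set $f(z)=u(x,y)+iv(x,y)$, regard the point $(1,0)$ as the point $z_0=1$ of the complex plane, and shrink the given neighborhood to an open disk $D$ centered at $z_0$ on which $f$ is holomorphic. Equation~\eqref{dey} says $|f(z)|^2=u(x,y)^2+v(x,y)^2=1/x$ for $z=x+iy\in D$; since $x>0$ throughout $D$ (we may take $D$ small enough), this shows that $f$ has no zeros in $D$.

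Because $D$ is simply connected and $f$ is holomorphic and zero-free on $D$, there is a holomorphic branch $g=\log f$ on $D$. Its real part equals $\log|f|$, and the real part of a holomorphic function is harmonic; hence $\log|f|$, and therefore $2\log|f|=\log(u^2+v^2)$, is harmonic on $D$. Alternatively, one can avoid branches of the logarithm: since $f$ is holomorphic, $u$ and $v$ are $C^\infty$, they are harmonic, and they satisfy \eqref{juw}; a short differentiation then gives $\Delta\log(u^2+v^2)=0$ wherever $u^2+v^2\neq 0$, using $u_x^2+u_y^2=v_x^2+v_y^2$ together with the identity $(uu_x-vu_y)^2+(uu_y+vu_x)^2=(u^2+v^2)(u_x^2+u_y^2)$.

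On the other hand, \eqref{dey} forces $\log\bigl(u(x,y)^2+v(x,y)^2\bigr)=-\log x$ on $D$, and
\[
\Delta(-\log x)=-\frac{\partial^2}{\partial x^2}\log x=\frac{1}{x^2}\neq 0
\]
for $(x,y)$ near $(1,0)$. This contradicts the harmonicity established above, so no such real functions $u$ and $v$ exist, which proves the lemma. The only step needing a little care is the harmonicity of $\log|f|$: one either invokes the existence of a holomorphic logarithm on the simply connected disk $D$, or carries out the short Laplacian computation sketched above. Everything else is immediate once \eqref{dey} is rewritten as $|f|^2=1/x$.
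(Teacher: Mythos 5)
Your proof is correct, and it takes a genuinely different route from the paper's. You reduce the lemma to the classical fact that $\log|f|$ is harmonic wherever a holomorphic $f$ is zero-free (either via a holomorphic branch of $\log f$ on the small disk $D$, or via the direct Laplacian computation you sketch, which does check out: with $u,v$ harmonic and the Cauchy--Riemann equations \eqref{juw} one gets $\Delta\log(u^2+v^2)=0$ away from zeros), and then note that \eqref{dey} would force $\log\bigl(u^2+v^2\bigr)=-\log x$, whose Laplacian is $1/x^2\neq 0$ near $(1,0)$ --- a two-line contradiction; the zero-freeness is immediate since $u^2+v^2=1/x>0$ on $D$. The paper instead stays at the level of first-order equations: it differentiates \eqref{dey}, combines the result with the Cauchy--Riemann equations to obtain two linear systems in the partial derivatives of $u$ and $v$, solves them by Cramer's rule, and integrates, getting $u=C(y)/\sqrt{x}$, $v=D(y)/\sqrt{x}$ from the $x$-derivatives and trigonometric dependence on $y$ with $x$-dependent frequency from the $y$-derivatives; the incompatibility of the two forms forces $u\equiv v\equiv 0$, contradicting \eqref{dey}. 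Your argument is shorter and sidesteps the ODE integrations (with their implicit assumptions, such as dividing by $u$ when separating variables) and the final incompatibility discussion, at the price of invoking the standard but less elementary facts that holomorphic functions are $C^\infty$ with harmonic real and imaginary parts, or that a zero-free holomorphic function on a simply connected domain has a holomorphic logarithm; the paper's proof, by contrast, uses only the Cauchy--Riemann equations, Cramer's rule, and elementary ODEs. The one point to state explicitly is the choice of $D$ small enough that $x>0$ there, which you do, and that is all that is needed.
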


\begin{proof}
To the contrary, let such $u(x,y)$ and
$v(x,y)$ exist. Then they must satisfy
the Cauchy--Riemann equations
\begin{equation}\label{lod}
 u'_x=v_y',\qquad u'_y=-v_x'\,.
\end{equation}
By \eqref{dey},
\begin{equation}\label{fyk}
 uu'_x+vv'_x=-\frac 1{2x^2},\qquad
 uu'_y+vv'_y=0\,.
\end{equation}
\medskip

\emph{Step 1.} Substituting \eqref{lod}
into the second equation of
\eqref{fyk}, we obtain the following
system of linear equations with respect
to $u'_x$ and $v'_x$:
\begin{equation}\label{hku}
\begin{aligned}
uu'_x+vv'_x&=-\frac 1{2x^2}\\
-vu'_x+uv'_x&=0
\end{aligned}
\end{equation}
Its determinant
is \eqref{dey}. By Cramer's rule,
\begin{equation}\label{jus}
u'_x=\frac{\begin{vmatrix}
-\frac 1{2x^2}&v\\0&u
\end{vmatrix}}{\frac 1x}=-\frac u{2x},
\qquad
v'_x=\frac{\begin{vmatrix}
u&-\frac 1{2x^2}\\-v&0
\end{vmatrix}}{\frac 1x}=-\frac v{2x}\,.
\end{equation}
From the first equation, $
\frac{\partial u}{\partial x}=-\frac
u{2x}$, thus $\frac{\partial u}u=-\frac
{\partial x}{2x}$,
\[
\ln u=-\frac 12\ln x+\ln C(y)=\ln x^{
-\frac 12}C(y),\qquad u=C(y)x^{
-\frac 12}\,.
\]
From the second equation in
\eqref{jus}, $v=D(y)x^{ -\frac 12}$. We
have that all solutions of the system
\eqref{hku} are given by
\begin{equation}\label{geo}
u=\frac {C(y)}{\sqrt x},\qquad
v=\frac {D(y)}{\sqrt x}
\end{equation}
(see \cite[Chapter IV]{hart}).

\medskip

\emph{Step 2.} Substituting \eqref{lod}
into the first equation of \eqref{fyk},
we obtain the following system of
linear equations with respect to $u'_y$
and $v'_y$:
\begin{align*}
vu'_y-uv'_y&=\frac 1{2x^2}\\
uu'_y+vv'_y&=0
\end{align*}
Its determinant is \eqref{dey}. By
Cramer's rule,
\begin{equation*}\label{jus1}
u'_y=\frac{\begin{vmatrix}
\frac 1{2x^2}&-u\\0&v
\end{vmatrix}}{\frac 1x}=\frac v{2x},
\qquad
v'_y=\frac{\begin{vmatrix}
v&\frac 1{2x^2}\\u&0
\end{vmatrix}}{\frac 1x}=-\frac u{2x}\,.
\end{equation*}
We obtain
\[
u''_{yy}=\frac {v'_y}{2x}=-\frac u{4x^2},\qquad
v''_{yy}=-\frac {u'_y}{2x}=-\frac u{4x^2}\,.
\]
The general solutions of
$u''_{yy}+\frac 1{4x^2} u =0$ and
$v''_{yy}+\frac 1{4x^2} v =0$ are
\begin{equation}\label{fhx}
 u=A(x)\cos \frac
y{2x}+B(x)\sin\frac
y{2x},\quad v=A_1(x)\cos \frac
y{2x}+B_1(x)\sin\frac
y{2x}\,.
\end{equation}

By \eqref{geo}, the functions $u\sqrt x
$ and $v\sqrt x $ do not depend on $x$.
By \eqref{fhx}, they do not depend of
$x$ only if $u$ and $v$ are identically
equal to $0$, which contradicts to
\eqref{dey}.
\end{proof}

\end{document}